\newtheorem{theorem}{Theorem}
\newtheorem{proposition}[theorem]{Proposition}
\newtheorem{remark}{Remark}
\newtheorem{example}{Example}
\def\qed{ \rule{.08in}{.08in}}
\newcommand{\blue}{\color{blue}}
\newcommand{\R}{\mathbb{R}}
\newcommand{\cL}{{\mathcal{L}}}
\newcommand{\dpt}{\operatorname{depth}}
\newcommand{\xc}[1]{\vspace{.3cm}

\noindent {\em #1} }
\newcommand{\mab}[1]{\vspace{.31cm}

\noindent {\bf #1} }
\def\blfootnote{\xdef\@thefnmark{}\@footnotetext}
\newtheorem{definition}{Definition}
\date{}
\begin{document}

\title{A Sufficient Condition for the Super-linearization of Polynomial Systems}
\author{M.-A. Belabbas\thanks{M.-A. Belabbas is with the Electrical and Computer Engineering Department and the Coordinated Science Laboratory, University of Illinois, Urbana-Champaign. Email: \texttt{\{belabbas@illinois.edu.}} \, and \,   
Xudong Chen\thanks{X. Chen is with the Department of Electrical, Computer, and Energy Engineering, University of Colorado Boulder. Email: \texttt{xudong.chen@colorado.edu}.}
}

\maketitle

\begin{abstract}
\blfootnote{M.-A. Belabbas and X. Chen contributed equally to the manuscript in all categories.}
 We provide in this paper a sufficient condition for a polynomial dynamical system $\dot x(t) = f(x(t))$ to be super-linearizable, i.e., to be such that all its trajectories are linear projections of the trajectories of a linear dynamical system. The condition is expressed in terms of the hereby introduced weighted dependency graph $G$, whose nodes $v_i$ correspond to variables $x_i$ and edges $v_iv_j$ have weights $\frac{\partial f_j}{\partial x_i}$. We show that if the product of the edge weights along any cycle in $G$ is a constant, then the system is super-linearizable. The proof is constructive, and we provide an algorithm to obtain super-linearizations and illustrate it on an example.
\end{abstract}

\section{Introduction}

The idea of linearizing system dynamics via embeddings dates back  at least  to the works of Carleman~\cite{carleman1932application} and Koopman~\cite{koopman1931hamiltonian,kowalski1991nonlinear}. These embeddings are still actively studied a century later, and have found applications in nonlinear control~\cite{brockett1976volterra, brockett2014early}, and  data-driven methods in control~\cite{mauroy2020koopman, otto2021koopman}. 

We derive in this paper a sufficient condition under which a polynomial system can be globally linearized by embedding it into a higher, yet finite-dimensional vector  space. In particular, the contribution of this paper is to provide a generalized converse of the result established in~\cite{belabbas2022canonical}. We elaborate on this below. 

To proceed, we consider the following dynamical system: 
 \begin{equation}\label{eq:mainsys}
 \dot x = f(x)	
 \end{equation}
 where $x \in \R^n$. This system is said to admit a  {\em super-linearization} (see Definition~\ref{def:suplinear} below) if there exist $m\geq 0$ functions, called  observables, which when adjoined to the original system would permit its linearization. A typical example~\cite{brunton2016koopman} is the following two-dimensional system
 \begin{equation}\label{eq:ex01}
 \begin{cases}
 \dot x = -x+y^2 \\
 \dot y = -y	
 \end{cases}
 \end{equation}
 Adding the observable $w:=y^2$, whose total time derivative is given by $\dot w = 2y\dot y = -2y^2 =-2w$, we obtain the three-dimensional {\em linear} system:
  \begin{equation}\label{eq:ex2}
 \begin{cases}
 \dot x = -x+w  \\
 \dot y = -y\\
 \dot w =-2w.	
 \end{cases}
 \end{equation}
Observe that the variables on which the nonlinear part of the dynamics~\eqref{eq:ex01} depend (here, the variable $y$) evolve in a linear, autonomous (i.e., independent from $x$) manner. In a recent paper~\cite{belabbas2022canonical}, we showed that {\em if} a polynomial system  admits a  so-called balanced super-linearization with only {\em one}  visible observable~\cite{belabbasobs2022}, then there exists a linear change of variables under which the nonlinear part of the dynamics  depends solely on variables evolving linearly and autonomously. The  dynamics resulting from the change of variable are termed the {\em canonical form}~\cite{belabbas2022canonical} for the polynomial system (explicitly, the canonical form is given in~\eqref{eq:triangularcase} below), and its existence provides a {\em necessary} condition for the super-linearization of that special class of polynomial systems.

Conversely, we exhibit in this paper a {\em sufficient} condition for the super-linearization of general polynomial systems, {\em without} any restriction on the number of visible observables. 
In particular, the result of this paper, combined with the ones of~\cite{belabbas2022canonical}, provide a necessary and sufficient condition for a class of polynomial systems with a single visible observable to be super-linearizable.

The remainder of the paper is organized as follows: We describe the relevant terminology and notation at the end of this section. We present the main result in Section~\ref{sec:mainresult} and its proof in Section~\ref{sec:proof}. The paper ends with a summary and outlook. 

\xc{Terminology and notation used.} We let $G = (V, E)$ be a directed graph (possibly with self-loops), with $V$ the node set and $E$ the edge set. We use $e = v_i v_j$ to denote a directed edge of $G$ from node $v_i$ to node $v_j$ (if $v_i = v_j$, then $e$ is a self-loop). A {\em walk} is a sequence of nodes $w = v_{i_1}v_{i_2}\ldots v_{i_k}$ such that $v_{i_{\ell}}v_{i_{\ell+1}}$ is an edge of $G$ for each $\ell= 1,\ldots, k-1$. The {\em length} of a walk is the number of edges it traverses. A path is a walk which does not visit a node more than once. We call the {\em depth} of $G$ the length of the longest path in $G$.

For a dynamical system $\dot x(t) = f(x(t))$, we denote by $e^{tf}x_0$ the solution of the system at time $t$ with  initial state $x_0$. 
For a vector field $g:\R^n \to \R^n$ and a differentiable vector-valued function $p:\R^n \to \R^k$, we denote the {\em Lie derivative} of $p$ along $g$ by 
$\cL_{g} p : = \frac{\partial p}{\partial x} g$.

\section{Statement of the Result}\label{sec:mainresult}
We start by defining what it means for system~\eqref{eq:mainsys} to be super-linearizable. Let $m \geq 0$ be an integer, and  $\Pi:\R^{n+m} \to \R^n$ be the canonical projection onto the first $n$ variables, namely, we have for $z \in \R^{n+m}$ that
$\Pi(z)=(z_1,\ldots,z_n)$. 
We reproduce the following definition from~\cite{belabbasobs2022}:
 
\begin{definition}[Super-linearization]\label{def:suplinear}
The vector field $f:\R^n\to \R^n$ is {\em super-linearizable} to the   system $\dot z = Az+D$  with $A \in \R^{(n+m)\times(n+m)}$ and $D \in \R^{n+m}$ if there exists an injective map $p:\R^n \to \R^{m}$   so that for all $x_0 \in \R^n$, the following holds: 
\begin{equation}\label{eq:equivalence1}
\Pi\left(e^{t(Az+D)}z_0 \right) = e^{tf}x_0	\mbox{ with } z_0=(x_0,  p(x_0)).
\end{equation} 
We call the functions $p:\R^n \to \R^m$ the {\em observables}.
\end{definition}
The data of $A, D$ and $p$ is referred to as a super-linearization of $f$. 
We can express the relation~\eqref{eq:equivalence1} as the following commutative diagram
\begin{center}
\begin{tikzcd}[column sep=huge, row sep=huge]
  \R^n \arrow[r, "e^{tf}"] \arrow[d,  "(\mathrm{id}{,}\,\,  p)" left] & \R^{n} \\
  \R^{n+m} \arrow[r,  "e^{t(Az+D)}" below] & \R^{n+m} \arrow[u,  "\Pi" right]
  \end{tikzcd}
\end{center}

We next introduce the following notion: 

\begin{definition}[Weighted dependency graph]\label{def:ddg}
Let $f: \R^n \to \R^n$ be a differentiable vector field. The {\bf weighted dependency graph (WDG)}  $G=(V,E,\gamma)$ of $f$ is a weighted directed graph (with self-loop) on $n$ nodes $v_1,\ldots, v_n$. For every ordered pair $(v_i,v_j)$, we define the scalar function:
$$
\gamma_{ij}(x):= \frac{\partial f_j(x)}{\partial x_i}\,\, \mbox{ for }\,\, 1 \leq i,j \leq n.
$$
There is an edge $v_iv_j$ in $G$ if  $\gamma_{ij} \neq 0$, and its weight is  $\gamma_{ij}$.
\end{definition}
We illustrate the definition on the following example:

\begin{example}\label{exmp:xumama}\normalfont
Consider the following polynomial system: 
\begin{equation}\label{eq:ex1}
\begin{cases} 
	\dot x_1 = x_2 \\
        \dot x_2 = -x_1 \\
	\dot x_3 =  x_2^2\\
        \dot x_4 = x_3 + x_1x_2^2 \\
        \dot x_5 = -x_5 + x_3^2 + x_1^2 x_2.
	\end{cases}
\end{equation}
Its weighted dependency graph is depicted in Figure~\ref{fig:figex1}. \hfill\qed
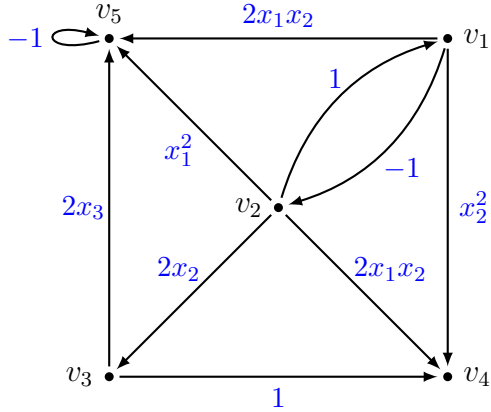
\begin{figure}
\begin{center}
\begin{tikzpicture}[scale=1.5]
	\node [circle,fill,inner sep=1.2pt,label=left:{$v_3$}] (2) at (0, 0) {};
	\node [circle,fill,inner sep=1.2pt,label=right:{$v_4$}] (3) at (3, 0) {};
	\node [circle,fill,inner sep=1.2pt,label=above:{$v_5$}] (1) at (0, 3) {};
        \node [circle,fill,inner sep=1.2pt,label=left:{$v_2$}] (5) at (1.5, 1.5) {};
        \node [circle,fill,inner sep=1.2pt,label=right:{$v_1$}] (4) at (3, 3) {};
  \path[draw,thick,shorten >=2pt,shorten <=2pt]
        (1) edge[loop left,min distance=6mm,-latex,in=165,out=-165] node[midway,left]{\small \blue{$-1$}}  (1) 
        (2) edge[-latex] node[midway,left,xshift=.1cm]{\small \blue{$2x_3$}}  (1) 
        (4) edge[-latex] node[midway,above,xshift=.0cm]{\small \blue{$2x_1x_2$}} (1)
        (5) edge[-latex] node[midway,below,xshift=-.2cm]{\small \blue{$x_1^2$}} (1)
        (5) edge[-latex] node[midway,above,xshift=-.2cm]{\small \blue{$2x_2$}} (2)
        (2) edge[-latex] node[midway,below,yshift=-0cm]{\small \blue{$1$}} (3)
        (4) edge[-latex] node[midway,right,xshift=.0cm]{\small \blue{$x_2^2$}} (3)
        (5) edge[-latex] node[midway,above,xshift=.361cm]{\small \blue{$2x_1x_2$}}(3)
	(4) edge[-latex, bend left = 28] node[midway,below right,xshift=-.2cm]{\small \blue{$-1$}}(5)
        (5) edge[-latex, bend left = 28] node[midway,above left,xshift=.2cm]{\small \blue{$1$}}(4)
	;
\end{tikzpicture}
\end{center}
\caption{The weighted dependency graph of system~\eqref{eq:ex1}. }\label{fig:figex1}
\end{figure}
\end{example}

Next, for each directed walk $w = v_{i_1}\ldots v_{i_k}$ in $G$, we let
$$
\gamma_w:= \prod_{j = 1}^{k-1} \gamma_{i_ji_{j+1}}.
$$
In the sequel, we will assume that $f$ is a polynomial vector field. It should be clear that $\gamma_w(x)$, for any walk $w$, is then a polynomial function in~$x$.  
Also, we assume, without loss of generality, that $G$ is weakly connected (otherwise, the original system can be decoupled into sub-systems of lower dimensions and our result, stated below, can be applied to each sub-system independently). 

The main result of this paper is as follows: 

\begin{theorem}\label{thm:main}
For a polynomial system $\dot x(t) = f(x(t))$, let $G$ be the associated weighted dependency graph.  
If $\gamma_c$ is a constant for every cycle $c$ of $G$, then~$f$ is super-linearizable.
\end{theorem}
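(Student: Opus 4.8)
The plan is to reduce super-linearization to the construction of a finite-dimensional space of polynomials. Concretely, I would seek a finite-dimensional linear subspace $W \subset \R[x_1,\dots,x_n]$ that contains the constant function $1$ and all coordinate functions $x_1,\dots,x_n$, and that is invariant under the Lie derivative $\cL_f$. Given such a $W$, pick a basis consisting of $1$, the $x_i$, and finitely many additional monomials $p_1,\dots,p_m$; since $\cL_f$ acts linearly on $W$, expanding $\cL_f$ of each basis function back in this basis produces matrices so that $z=(x,p(x))$ satisfies $\dot z = Az + D$, with $D$ collecting the coefficients of $1$. This is exactly a super-linearization, and because the first $n$ coordinates of the lift are the identity, the embedding $x\mapsto(x,p(x))$ is automatically injective. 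So the entire problem is to exhibit a finite $\cL_f$-invariant $W$.

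The device I would use is a grading. Assign to each variable $x_i$ a positive integer weight $w_i\ge 1$ and extend it to monomials multiplicatively, so that $x^\alpha$ has weight $\langle w,\alpha\rangle=\sum_i w_i\alpha_i$. The point is that if the weights can be chosen so that every monomial appearing in each $f_i$ has weight at most $w_i$, then $\cL_f$ is weight non-increasing: differentiating $x^\alpha$ produces terms $x^{\alpha-e_i}\,x^\beta$ with $x^\beta$ a monomial of $f_i$, of weight $\langle w,\alpha\rangle - w_i + \langle w,\beta\rangle \le \langle w,\alpha\rangle$. Consequently the orbit of each coordinate $x_i$ under iterated $\cL_f$ stays within monomials of weight at most $w_i$; since all weights are at least $1$, such monomials have total degree at most $w_i$ and are therefore finite in number. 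Taking $W$ to be the span of $1$ together with all monomials of weight at most $\max_i w_i$ yields a finite-dimensional $\cL_f$-invariant space as required.

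It remains to extract such weights from the cycle hypothesis, and this is where the condition enters. The key elementary observation is that an edge weight $\gamma_{ij}$ is a nonzero polynomial whenever the edge is present, so if an edge lies on a cycle $c$ then $\gamma_c$ is a product of nonzero polynomials; requiring $\gamma_c$ to be a constant forces it to be a nonzero constant, and hence forces every factor, in particular $\gamma_{ij}$, to be a constant. Thus every edge lying on a cycle has constant weight. Passing to the condensation of $G$ into strongly connected components, every intra-component edge lies on a cycle and is therefore constant; this means that inside $f_i$ a same-component predecessor $x_j$ can occur only as an isolated linear term $c_{ij}x_j$, while all remaining monomials of $f_i$ involve only variables from strictly upstream components. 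I would then assign a single weight $W(C)$ to all nodes of a component $C$ by processing the components in topological order: set $W(C)=1$ for source components, and for a downstream $C$ set $W(C)$ to be at least the maximum weight of the already-weighted upstream monomials occurring in the $f_i$, $i\in C$. This guarantees $\langle w,\beta\rangle\le w_i$ for every monomial $x^\beta$ of $f_i$, which is precisely the hypothesis needed in the grading step.

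The main obstacle is the construction of the weights, i.e.\ the step that converts the global, multiplicative cycle condition into the local, additive inequalities $\mathrm{wt}(f_i)\le w_i$; the degree argument showing that constant cycle products force intra-component edges to be constant is the crux, after which the topological assignment and the finiteness conclusion are routine. The resulting procedure is algorithmic: compute the strongly connected components, assign the weights in topological order, and then close the coordinate functions under $\cL_f$, collecting the finitely many monomials that appear as the observables.
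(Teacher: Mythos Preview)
Your argument is correct and the grading device is elegant. You share with the paper the two structural ingredients that do the real work: (i) every edge inside a strongly connected component lies on a cycle and hence has constant weight, so that $f_j$ splits as an affine function of its own-component variables plus a polynomial in strictly upstream variables; and (ii) processing the condensation in topological order. Where you diverge is in how these ingredients are assembled. The paper runs an induction on the depth $\ell$ of the skeleton graph: it first shows (Proposition~\ref{prop:propip}) that any system of the triangular form~\eqref{eq:triangularcase} is super-linearizable because $\cL_{f'}$ acts on the finite-dimensional space $P_d$, and then inductively absorbs one more depth layer at a time, producing nested super-linearizations. You instead make a single global pass: the topological assignment of weights $W(C)$ ensures $\mathrm{wt}(x^\beta)\le w_i$ for every monomial of $f_i$, so $\cL_f$ is weight non-increasing on monomials, and the span of all monomials of weight at most $\max_i w_i$ is a finite-dimensional $\cL_f$-invariant subspace in one shot.

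What each buys: the paper's recursive construction dovetails directly with the layer-by-layer algorithm it presents, and yields observables that are iterated Lie derivatives, which connects cleanly to the minimal-polynomial viewpoint in Remark~\ref{rem:xumama}. Your approach is shorter, gives observables that are monomials, and comes with an explicit a priori bound on the embedding dimension (the number of monomials of weight at most $\max_i w_i$), avoiding the intermediate Proposition entirely. Both are complete proofs of Theorem~\ref{thm:main}.
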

The sufficient condition stated in the Theorem implies that the Jacobian of $f$ is constant.
Note that the weighted dependency graph of system~\eqref{eq:ex1}, depicted in Figure~\ref{fig:figex1}, satisfies the sufficient condition of Theorem~\ref{thm:main}, and thus system~\eqref{eq:ex1} is super-linearizable. As an illustration of the proof technique used, we will provide toward the end a super-linearization of this system.

\section{Proof of Theorem~\ref{thm:main} and an Algorithm}\label{sec:proof}

\subsection{Proof of the Theorem}
We  start with a simple proposition, dealing with systems where the variables on which the nonlinear part of the dynamics depend evolve linearly and autonomously, and show that such systems are super-linearizable.  This result provides a converse of the result of~\cite{belabbas2022canonical}, and will  be used as a building block to establish the general case.

\begin{proposition}\label{prop:propip}
Suppose that the system $\dot x(t) = f(x(t))$  takes the following form:
\begin{equation}\label{eq:triangularcase}
\begin{cases}
\dot x'(t) = A' x'(t) + D \\
\dot x''(t) = A'' x''(t) + g(x'(t)),
\end{cases}
\end{equation}
where $x = (x'; x'')$, $D$ is a constant vector, and $g$ is a polynomial; then, system~\eqref{eq:triangularcase} is super-linearizable.
\end{proposition}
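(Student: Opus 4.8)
The plan is to exploit the triangular structure of~\eqref{eq:triangularcase}: because $x'$ evolves affinely and autonomously, every polynomial in $x'$ satisfies a finite-dimensional linear ODE, and the nonlinear forcing $g(x')$ that drives the $x''$-dynamics is exactly such a polynomial. I would therefore take as observables the monomials in the entries of $x'$ and show that, once these are adjoined, the whole system becomes affine-linear.

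Concretely, let $d = \deg g$ and let $p(x)$ be the vector whose entries are all monomials $(x')^\alpha$ in the entries of $x'$ with $2 \le |\alpha| \le d$; write $y := p(x)$ for the corresponding observable coordinates. The central computation is the closure property: for a monomial $m = (x')^\alpha$ of degree $|\alpha| = k$, the Lie derivative along $f$ is
\begin{equation}
\cL_f m = \frac{\partial m}{\partial x'}\,\dot x' = \sum_i \frac{\partial m}{\partial x'_i}\,(A' x' + D)_i .
\end{equation}
Each $\partial m/\partial x'_i$ is, up to a scalar, a monomial of degree $k-1$; multiplying by the affine term $(A'x' + D)_i$ produces monomials of degree $k$ (from the $A'x'$ part) and degree $k-1$ (from the $D$ part). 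Hence $\cL_f m$ is a linear combination of the constant $1$, the entries of $x'$, and the entries of $y$, i.e.\ an affine-linear function of $(x', y)$. Running this over all monomials of degree $\le d$ shows that $\dot y = \cL_f p$ is affine-linear in $(x', y)$; the essential point is that the degree never increases, which is precisely where the linear-and-autonomous form of the $x'$-dynamics is used.

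With the observables in hand, I would assemble the augmented system on $z = (x', x'', y)$. The first block $\dot x' = A' x' + D$ is affine-linear by hypothesis; the third block $\dot y$ is affine-linear by the closure computation; and for the second block $\dot x'' = A'' x'' + g(x')$ I would rewrite $g(x')$ as a linear combination of $1$, the entries of $x'$, and the entries of $y$, which is possible since every monomial of $g$ has degree $\le d$. Collecting the three blocks yields $\dot z = A z + \tilde D$ for a constant matrix $A$ and constant vector $\tilde D$, with initial condition $z_0 = (x_0, p(x_0))$ and projection recovering $x$, which is exactly the form demanded by Definition~\ref{def:suplinear}; the embedding $(\mathrm{id}, p)$ is injective because of its identity block.

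I expect the main (and essentially the only) difficulty to be bookkeeping rather than anything conceptual: one must verify carefully that the finite set of monomials of degree $\le d$ is genuinely closed under $\cL_f$, so that differentiating an observable never generates a monomial outside the chosen list. This closure fails the instant the $x'$-dynamics contains any nonlinearity, since then differentiation would raise the degree and no finite collection of monomials would suffice, which is exactly why the hypothesis that $x'$ evolves linearly and autonomously is indispensable.
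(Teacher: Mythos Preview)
Your argument is correct and rests on the same core observation as the paper's proof: because $\dot x' = A'x' + D$ is affine, the Lie derivative $\cL_{f'}$ does not raise the polynomial degree, so the finite-dimensional space $P_d$ of polynomials in $x'$ of degree at most $d$ is invariant. The paper, however, packages this differently. Rather than adjoining all monomials of degree $\le d$, it views $\cL_{f'}$ as a linear endomorphism of $P_d$, takes its minimal polynomial $s^N + \alpha_{N-1}s^{N-1} + \cdots + \alpha_0$, and chooses as observables the successive Lie derivatives $p_1 = g,\, p_2 = \cL_{f'}g,\,\ldots$; the minimal-polynomial relation then forces $\dot p$ into companion form. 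Your approach is more elementary (no Cayley--Hamilton step) and makes the closure completely explicit, at the cost of a larger observable set; the paper's approach yields a smaller, Krylov-type set of observables tailored to $g$ and an explicit block-companion structure for the augmented dynamics, which is what they actually use in the worked example.
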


\begin{proof}
Let $n'$ be the dimension of $x'$ and $d$ be the degree of $g$. Let $P_d$ be the vector space of all polynomials in~$x'$ with real coefficients, whose dimension is $$r:=\dim P_d = {n' + d\choose d}.$$ 
Next, for convenience, we let $f'(x'):= A'x' + D$. Since $f'$ is affine, $\cL_{f'} \phi\subseteq P_d$ for any $\phi\in P_d$ and, hence, $\cL_{f'}: P_d\to P_d$ is a linear automorphism. Let the minimal polynomial associated with $\cL_{f'}$ be given by
$$
s^N + \alpha_{N-1} s^{N-1} + \cdots + \alpha_0   
$$
for some $N\leq r$. In particular, for any $\phi\in P_d$, we have that
$$
(\cL_{f'}^N \phi) + \alpha_{N-1} (\cL_{f'}^{N-1} \phi)+ \cdots + \alpha_0 \phi = 0. 
$$
Now, define 
\begin{equation}\label{eq:defp} p(x) = \begin{bmatrix} p_1(x) \\ p_2(x) \\ \vdots \\ p_N(x) 
\end{bmatrix} :=\begin{bmatrix}
g(x') \\
\cL_{f'} g(x')\\
\vdots \\
\cL^{N}_{f'} g(x')
\end{bmatrix}.
\end{equation}
It then follows that the time derivative of $p(x(t))$ is
\begin{equation}\label{eq:defdynp}
\frac{d}{dt}
\begin{bmatrix}
p_1\\
p_2\\
\vdots \\
p_{N-1} \\
p_N
\end{bmatrix} = 
\begin{bmatrix}
0 & I & 0 & \cdots & 0\\
0 & 0 & I & \cdots  & 0\\
\vdots & \vdots & \ddots &  \ddots  & \vdots\\
0 & 0 & \cdots &  0 & I\\
-\alpha_0I & -\alpha_1I & \cdots & -\alpha_{N-2}I & -\alpha_{N-1}I
\end{bmatrix}
\begin{bmatrix}
p_1\\
p_2\\
\vdots \\
p_{N-1} \\
p_N
\end{bmatrix}.
\end{equation}
This completes the proof. 
\end{proof}

We next introduce two notions that are necessary for  enabling the recursive use of Proposition~\ref{prop:propip} in the proof of the main theorem.
The first is the notion of  strong component decomposition.

\begin{definition}[Strong component decomposition]\label{def:strongcomp} 
Let $G$ be a weakly connected digraph. 
The subgraphs $G_i = (V_i, E_i)$, for $1 \leq i \leq q$, form {\em a strong component
decomposition} of $G$ if the following items hold: 
\begin{enumerate}
\item The $V_i$'s partition the vertex set as $V = \sqcup_{i=1}^q V_i$;
\item Each $G_i$ is a subgraph induced by $V_i$ and is strongly connected;
\item Any strongly connected subgraph $G'$ of $G$ is a subgraph of some $G_i$, for $i\in \{1,\ldots, q\}$.
\end{enumerate}
\end{definition}

By treating the strongly connected components $G_i$ as single nodes, we obtain the second notion, namely the one of skeleton graph $S$ of $G$:

\begin{definition}\label{def:skeleton}
    Let $G=(V,E)$ be a weakly connected digraph, and let $G_1,\ldots, G_q$ be the strong component decomposition of $G$. The {\em skeleton graph $S = (U,F)$} is a digraph on $q$ nodes $u_1,\ldots, u_q$, corresponding to $G_1,\ldots, G_q$. There is no self-loop in $S$. There is an edge $u_i u_j$, for $u_i\neq u_j$, only if there exist a node $v_{i'}$ in $G_i$ and a node $v_{j'}$ in $G_j$ such that $v_{i'}v_{j'}$ is an edge in $G$. Further, we denote by $\pi:V \to U$ the map that sends nodes $v_{i'}$ in $V_i$ to $u_i$. 
\end{definition}
We illustrate the definition in Figure~\ref{fig:skelex1}.

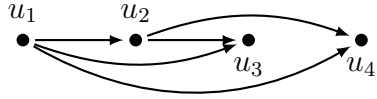
\begin{figure}
\begin{center}
\begin{tikzpicture}
   
\node [circle,fill=black,inner sep=1.7pt, label=above:{$u_1$}] (1) at (0, 0) {};
\node [circle,fill=black,inner sep=1.7pt,label=above:{$u_2$}] (2) at (1.5, 0) {};
\node [circle,fill=black,inner sep=1.7pt,label=below:{$u_3$}] (3) at (3, 0) {};
\node [circle,fill=black,inner sep=1.7pt,label=below:{$u_4$}] (4) at (4.5, 0) {};

\path[draw,thick,shorten >=2pt,shorten <=2pt]		 
          (1) edge[-latex] (2)
          (1) edge[-latex, bend right = 20] (3)
           (1) edge[-latex, bend right = 30] (4)
          (2) edge[-latex] (3)
		 (2) edge[-latex, bend left = 20] (4)
          ;
\end{tikzpicture}
\end{center}
\caption{The skeleton graph $S$ of the WDG $G$ of system~\eqref{eq:ex1}, depicted in Figure~\ref{fig:figex1}. Note that $\pi^{-1}(u_1)= \{v_1,v_2\}$, $\pi^{-1}(u_2)=\{v_3\}$, $\pi^{-1}(u_3)=\{v_4\}$, and $\pi^{-1}(u_4)=\{v_5\}$.
}\label{fig:skelex1}
\end{figure}

A subgraph $S' = (U',F')$ of $S$ induces a subgraph of $G$, obtained by only keeping the nodes of $G$ contained in the strong components represented by nodes of $S'$; precisely, to $S'$, we attach  the subgraph $G_{S'}$ of $G$ induced by $\pi^{-1}(U')$. See Figure~\ref{fig:apper} for an illustration. 
\begin{figure}
\begin{center}
    \begin{tikzpicture}[scale=0.8]

		\node [circle,fill=blue,inner sep=1.2pt] (11) at (0, 0) {};
		\node [circle,fill=blue,inner sep=1.2pt] (12) at (.665, .8) {};
		\node [circle,fill=blue,inner sep=1.2pt] (13) at (1.2, 0) {};

		\node [circle,fill=red,inner sep=1.2pt] (21) at (3, 0) {};
		\node [circle,fill=red,inner sep=1.2pt] (22) at (3, .8) {};

		\node [circle,fill=OliveGreen,inner sep=1.2pt] (31) at (1.4, -1.5) {};
		\node [circle,fill=OliveGreen,inner sep=1.2pt] (32) at (2.2, -1.5) {};

  \path[draw,thick,shorten >=2pt,shorten <=2pt]
		 (11) edge[blue,bend left=10,-latex] (12)
          (11) edge[loop left, -latex, blue, min distance=7mm,in=165,out=-165] (11) 
		 (12) edge[blue,bend left=10,-latex] (13)
		 (13) edge[blue,bend left=10,-latex] (11)
          (21) edge[red,bend left=20,-latex] (22)
		 (22) edge[red,bend left=20,-latex] (21)
          (31) edge[OliveGreen,bend left=20,-latex] (32)
		 (32) edge[OliveGreen,bend left=20,-latex] (31)
          (12) edge[-latex] (22)
          (13) edge[-latex] (22)
		 (11) edge[-latex] (31)
          (21) edge[-latex] (32)
		 ;
 \node at (1.8,-2.8){(a)};
\begin{scope}[xshift=7cm]

\node [circle,fill=blue,inner sep=1.7pt, label=above:{$u_1$}] (1) at (0, 0.4) {};
\node [circle,fill=red,inner sep=1.7pt,label=above:{$u_2$}] (2) at (2.2, 0.4) {};
\node [circle,fill=OliveGreen,inner sep=1.7pt,label=below:{$u_3$}] (3) at (1.1, -1.5) {};

\path[draw,thick,shorten >=2pt,shorten <=2pt]
		 
   (1) edge[-latex] (2)
		 (2) edge[-latex] (3)
		 (1) edge[-latex] (3)
          ;
\node at (1.1,-2.8){(b)};
\end{scope}

\begin{scope}[xshift=7cm,yshift=-4.9cm]

\node [circle,fill=blue,inner sep=1.7pt,label=above:{$u_1$}] (1') at (0, 0.4) {};
\node [circle,fill=red,inner sep=1.7pt,label=above:{$u_2$}] (2') at (2.2, 0.4) {};

\path[draw,thick,shorten >=2pt,shorten <=2pt]
		 (1') edge[-latex] (2');
\node at (1.1,-1.2){(c)};
\end{scope}

\begin{scope}[yshift=-4.9cm]
	\node [circle,fill=blue,inner sep=1.2pt] (11) at (0, 0) {};
		\node [circle,fill=blue,inner sep=1.2pt] (12) at (.665, .8) {};
		\node [circle,fill=blue,inner sep=1.2pt] (13) at (1.2, 0) {};

		\node [circle,fill=red,inner sep=1.2pt] (21) at (3, 0) {};
		\node [circle,fill=red,inner sep=1.2pt] (22) at (3, .8) {};

  \path[draw,thick,shorten >=2pt,shorten <=2pt]
		 (11) edge[blue,bend left=10,-latex] (12)
   (11) edge[loop left, -latex, blue, min distance=7mm,in=165,out=-165] (11)
		 (12) edge[blue,bend left=10,-latex] (13)
		 (13) edge[blue,bend left=10,-latex] (11)
          (21) edge[red,bend left=20,-latex] (22)
		 (22) edge[red,bend left=20,-latex] (21)
          (12) edge[-latex] (22)
          (13) edge[-latex] (22)
		 ;
    \node at (1.8,-1.2){(d)};
\end{scope}
\end{tikzpicture}
\end{center}
\caption{Illustration of $G_{S'}$: (a) A weakly connected digraph $G=(V,E)$, with three strongly connected components highlighted in blue, red, and green, respectively; (b) The skeleton graph $S=(U,F)$ of $G$, with $U=\{u_1,u_2,u_3\}$ and $F=\{u_1u_2,u_1u_3,u_2u_3\}$; (c) A subgraph $S'$ of $S$; and (d) The corresponding subgraph $G_{S'}$ of $G$.}\label{fig:apper}
\end{figure}
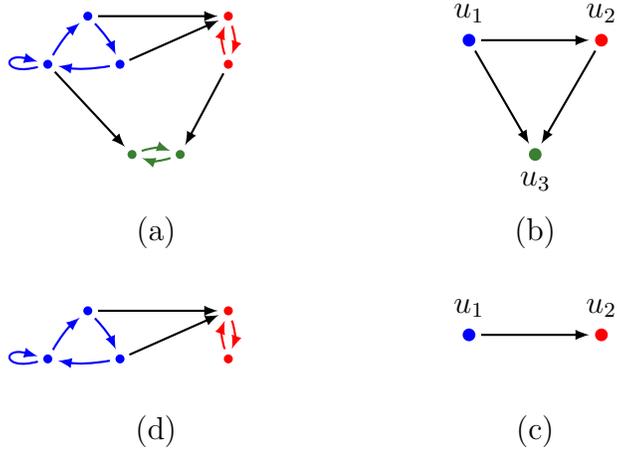
Note that the skeleton graph $S$ is acyclic because otherwise, it will contradict the third item of Definition~\ref{def:strongcomp}. 

Let $\ell$ be the depth of the graph $S$; we now introduce a node set decomposition, termed the {\em depth decomposition}, of $S$: 
\begin{equation}\label{eq:depthdecomp}
U = \sqcup_{m = 0}^\ell U_m.
\end{equation} 
Starting with $U_0$, we simply let it be the subset of nodes of $U$ {\em without} incoming edges. Since $S$ is acyclic, $U_0$ is non-empty. Now to each node $u_j$ in $U - U_0$, we assign the set $P_j$ of paths from nodes in $U_0$ to $u_j$. It should be clear that $P_j$ is non-empty. We define the {\em depth of the node} $u_j$, denoted by $\dpt (u_j)$,  to be the maximal length of all paths in $P_j$, i.e.,
$$
\dpt (u_j) := \max \{ \operatorname{length} (w) \mid w\in P_j\}.
$$
The subset $U_m$ is then the collection of all nodes in $S$ of depth~$m$. The subsets~$U_m$, for $0\leq m \leq \ell$, are all nonempty,  pairwise disjoint, and their union is~$U$.

With the preliminaries above, we establish Theorem~\ref{thm:main}

\begin{proof}[Proof of Theorem~\ref{thm:main}]
Let $G = (V, E, \gamma)$ be the weighted dependency graph of the polynomial vector field $f$. 
Let $S = (U,F)$ be the associated skeleton graph (obtained using Definition~\ref{def:skeleton} and ignoring  the weights~$\gamma$ of $G$), and $\ell$ be the depth of $S$. 
Because $G$ is weakly connected by assumption, so is $S$. The proof will be carried out by induction on~$\ell$.

\xc{Base case $\ell = 0$:} In this case, since $S$ is weakly connected, it is a single node. It follows  that $G$  is strongly connected. Next, we claim that {\em all} the weights $\gamma_{ij}$ for the edges $v_iv_j$ of $G$ are constant. To see this, for each edge $v_iv_j$ in $G$, we let $c=v_{i_1}v_{i_2}\cdots v_{i_k}v_{i_1}$ be a cycle in $G$ that contains this edge, with $v_{i_1} v_{i_2} = v_iv_j$.  By the hypothesis of Theorem~\ref{thm:main}, it holds that $\gamma_{c}$ is constant. We  have that
$$
\gamma_{v_iv_j} \gamma_{v_{i_2}\cdots v_{i_k}v_{i_1}}= \gamma_c. 
$$
Since $\gamma_c$ is a constant and since both $\gamma_{v_iv_j} $ and  $\gamma_{v_{i_2}\cdots v_{i_k}v_{i_1}}$ are  polynomials (over $\R$), it must hold that they are also constants. This establishes the claim. As a consequence, the vector field $f$ is an affine function. 
This completes the proof for the base case.

\xc{Inductive step:} We assume that the statement holds for $\ell\geq 0$ and prove it for $(\ell + 1)$. 
Let $\sqcup_{m = 0}^{\ell + 1} U_m$ be the node set decomposition of $U$  introduced in~\eqref{eq:depthdecomp}.  
Consider the subgraph~$S'$ of~$S$ induced by the nodes in $\sqcup_{m = 0}^\ell U_m$, and $S''$ the subgraph of $S$ induced by nodes in $U_{\ell + 1}$.

It should be clear that $S'$ is itself an acyclic digraph whose depth is~$\ell$,  and that $S''$ is a  union of isolated nodes. To see that the latter statement holds, it suffices to observe that if $S''$ has an edge, then it necessarily has nodes with different depths. 
We let $x'(t)$ be the vector with entries taken from  $x(t)$ corresponding to nodes in $G_{S'}$ and $x''(t)$ be the vector corresponding to $G_{S''}$. By construction of $S'$, the dynamics of $x'(t)$ do not depend on $x''(t)$ and, hence, we can write the said dynamics as $\dot x'(t) = f'(x'(t))$.  
On the one hand, by applying the induction hypothesis to each connected component of $S'$, we have that $f'$ is super-linearizable.    
We set $p'$ to be the associated observables, on which the super-linearization relies. 

On the other hand, the dynamics of $x''(t)$ may depend on both $x'(t)$ and $x''(t)$, i.e., $\dot x''(t) = f''(x'(t), x''(t))$ for $f''$ a polynomial vector field. Since each connected component of $G_{S''}$ is strongly connected, every edge in $G_{S''}$ belongs to a cycle in $G_{S''}$. By the hypothesis of Theorem~\ref{thm:main} and by the same arguments given in the base case, we then have that all the edge weights in $G_{S''}$ are constants. 
This implies that $f''(x',x'')$ is {\em affine} in $x''$ (note that edge weights in $G_{S''}$ only take into account differentiation of $f''$ with respect to $x''$, i.e., the variables corresponding to nodes $G_{S''}$). Combining the above, the dynamics can be expressed as
\begin{equation}\label{eq:laststep}
\begin{cases}
\dot z'(t) = A' z'(t) + D\\
\dot x''(t) = A''x''(t) + g(z'(t)),
\end{cases}
\end{equation}
where, owing to Proposition~\ref{prop:propip}, $z' := (x'; p')$, $A'$ and $A''$ are constant matrices, $D$ is a constant vector, and $g$ is a polynomial vector field. By Proposition~\ref{prop:propip}, system~\eqref{eq:laststep} is super-linearizable. This completes the proof.   
\end{proof}

\begin{remark}\label{rem:xumama}
\normalfont
Using  arguments similar to the ones of the proof of Theorem~\ref{thm:main}, one can establish the following fact (with proof omitted): suppose that $f$ is a smooth vector field; then, the system $\dot x = f(x)$ admits a super-linearization 
$\dot z =  A z + D$, 
where $z=\begin{pmatrix}x_1,\ldots,x_n, p_1,\ldots,p_m \end{pmatrix}$ as described below~\eqref{eq:laststep}, {\em if and only if} there exist an integer $N >0$ and coefficients $c_k \in \R$, for $k=0,\ldots, N-1$, such that
\begin{equation}\label{eq:XUMAMA}
\cL^N_f f = \sum_{k=0}^{N-1} \alpha_k \, \cL^{k}_f f.
\end{equation}
From that vantage point, the main result of this paper can  be restated as follows:  if~$f$ is a polynomial vector field and if~$f$ satisfies the condition of Theorem~1, then $f$ satisfies~\eqref{eq:XUMAMA} and is thus super-linearizable. 
\end{remark}

\subsection{Algorithm for Super-linearization}

The steps outlined in the proof of Theorem~\ref{thm:main} can be formalized as an algorithm, which we will present below. For ease of presentation, we introduce some notations.

Let $G$ be the WDG of a given system $\dot x = f(x)$ and $S$ be the corresponding skeleton graph. 
Let  $U = \sqcup_{m = 0}^\ell U_m$ be the depth decomposition, 
$S_m$ be the subgraph of $S$ induced by $U_m$. With a slight abuse of notation, we will use $x_{m}$ to denote the ``sub-vector'' of $x$ with entries corresponding to the nodes in $G_{S_{m}}$, 
and let $f_m(x)$ be defined such that $\dot x_m(t) = f_m(x(t))$.

The algorithm for super-linearization is as follows: 

\mab{Input:} A polynomial map $f:\R^n \to \R^n$ for the system $\dot x(t) = f(x(t))$.
\xc{Step 1:} Compute the WDG $G$ of the system and terminate if $G$ does not satisfy the  conditions of Theorem~\ref{thm:main}.  
\xc{Step 2:} Compute the skeleton graph $S = (U,F)$, its depth~$\ell$, and the depth decomposition $U = \sqcup_{m = 0}^\ell U_m$. 

\xc{Step 3:} Set $\ell' := 0$ and $z_{0}:= x_{0}$. While $\ell' < \ell$, repeat:
\begin{enumerate}
\item[\em 3.1:] Perform the super-linearization of the following system:
\begin{equation}\label{eq:presuperalg}
\begin{cases}
\dot z_{\ell'}(t) = A_{\ell'} z_{\ell'}(t) + D_{\ell'},\\
\dot x_{\ell'+1}(t) = f_{\ell'+1}(x(t)).
\end{cases}
\end{equation}
and obtain the super-linearized dynamics of~\eqref{eq:presuperalg}
\begin{equation}\label{eq:postsuperalg}
\dot z_{\ell' + 1}(t) = A_{\ell'+1} z_{\ell' + 1}(t) + D_{\ell' + 1}
\end{equation}
with observables $p_{\ell'+1}$.
\item[\em 3.2:] Increase $\ell'$ by $1$.
\end{enumerate}
\mab{Output:} The data $(A_\ell, D_\ell, p_\ell)$ as a super-linearization of the original system.

\begin{remark} \normalfont We elaborate below on a few points of Step 3.1 in the Algorithm: 
\begin{enumerate}
\item When $\ell' = 0$,~\eqref{eq:presuperalg} implies that the dynamics of $x_0$ are necessarily affine. It is indeed the case, and was argued in the proof of Theorem~\ref{thm:main} (the base case).

\item In~\eqref{eq:presuperalg}, the dynamics of $x_{\ell'+1}$ depend only on $x_{0}, \ldots, x_{\ell'+1} $ and, moreover, linearly in $x_{\ell'+1}$ as was argued in the  proof of Theorem~\ref{thm:main} (the inductive step). Note that $z_{\ell'+1}$ contains the variables $x_0,\ldots, x_{\ell'+1}$ and the observables $p_{\ell'+1}$.

\item In order to obtain the super-linearized dynamics~\eqref{eq:postsuperalg}, one can follow, e.g., the steps of the proof of Proposition~\ref{prop:propip}. The fact that~\eqref{eq:presuperalg} is in the same form as~\eqref{eq:triangularcase} is argued in the second item of this remark. More specifically, the first step is then  to determine the degree $d$ of the polynomial vector field $f_{\ell'+1}$. Next,  upon choosing a basis for $P_d$, determine the matrix of the linear operator $\cL_{ \bar f_{\ell'}}:P_d \to P_d$ where $\bar f_{\ell'}(z):= A_{\ell'} z + D_{\ell'}$ and compute the minimal polynomial of this matrix. Finally, introduce the observables $p$ as given in~\eqref{eq:defp}; they obey the linear dynamics~\eqref{eq:defdynp}. 

There exist other ways to obtain a super-linearization of the system; we will in fact follow a slightly different approach in the example next.
\end{enumerate}
\end{remark}

We illustrate the algorithm on the polynomial system given in Example~\ref{exmp:xumama}. Recall that the WDG $G$ of the system is given in Figure~\ref{fig:figex1}, and the corresponding skeleton graph $S = (U, F)$ is in Figure~\ref{fig:skelex1}. 

We next compute the depth decomposition of $U$. The only node that has no incoming edges is $u_1$, and thus $U_0 = \{u_1\}$. The longest path joining $u_1$ to $u_2$ is of length $1$, and the longest paths from $u_1$ to either $u_3$ or $u_4$ are of lengths $2$;  hence $U_1=\{u_2\}$ and $U_2=\{u_3,u_4\}$. 

Now, for Step 3, there will be two iterations: 
\begin{enumerate}
\item 
The first iteration considers the dynamics of the variables associated to $U_0$ (namely $x_1,x_2$) and $U_1$ (namely, $x_3$). We have
\begin{equation}\label{eq:algstep01}
\begin{cases} 
	\dot x_1 = x_2 \\
        \dot x_2 = -x_1 \\
	\dot x_3 =  x_2^2. \\
	\end{cases}
\end{equation}
We observe that the dynamics associated to the nodes in $U_0$ are indeed linear. 
Following~\eqref{eq:defp}, we set $x = (x',x'')$ with $x':=(x_1, x_2)$ and $x'':=x_3$, $p_1(x):=x_2^2$,  and $f'(x'):=(x_2, -x_1)$. We obtain that 
\begin{align*}
\cL_{f'} p_1 & = -2x_1x_2 =:p_2  \\
 \cL_{f'} p_2 & = 2(x_1^2-x_2^2)=:p_3\\
 \cL_{f'} p_3 & = 8x_1x_2 = -4p_2.
\end{align*}
The super-linearized system is thus
\begin{equation}\label{eq:iter1}
\dot z_1= 
\begin{bmatrix} 
\dot x_1\\
\dot x_2\\
\dot x_3\\
\dot p_1\\
\dot p_2\\
\dot p_3
\end{bmatrix}
= 
\begin{bmatrix} 
x_2\\
-x_1\\
p_1\\
p_2\\
p_3\\
-4p_2
\end{bmatrix} =: A_1z_1. 
\end{equation}
\item The second iteration starts with the super-linearized system~\eqref{eq:iter1} with the dynamics of the variables in $U_2$ adjoined. Namely, with 
$$
\begin{cases}
\dot z_1 = A_1 z_1\\
\dot x_4 = x_3+x_1x_2^2\\
\dot x_5 = -x_5 +x_3^2 + x_1^2x_2
\end{cases}
$$

To proceed, we could attempt to super-linearize the vector $(x_1x_2^2; x_3^2 + x_1^2x_2)$ at once, or handle each entry consecutively. We choose the latter option, which deviates slightly from the procedure described in Proposition~\ref{prop:propip} but requires fewer computations. Also, note that there is some freedom in how one expresses the nonlinear terms. For example, $x_1x_2^2$ can also be written as  $x_1p_1$ or $ -\frac{1}{2}x_2p_2$, given the observables introduced in the first iteration.  

We start by setting $p_4:=x_1x_2^2$ and $f'(z_1):=A_1z_1$. By computation, we obtain that
\begin{align*}
\cL_{f'} p_4 & =x_2^3 - 2x_1^2 x_2 =: p_5\\
\cL_{f'} p_5 & = -7x_1x_2^2 + 2x_1^3 =
 -7p_{4}+2x_1^3 =: p_6\\
\cL_{f'} p_6 & = -7p_5+6x_1^2x_2 =: p_7 \\
\cL_{f'} p_7 & = -7p_6 + 12x_1x_2^2-6x_1^3 \\
&= -7p_6+12p_4-3(p_6+7p_4)= -10p_6-9p_4.
\end{align*}

Next, we set $p_8:=x_3^2 + x_1^2x_2$ and
 \begin{align*}
\cL_{f'} p_8 & = 2x_3p_1 + 2x_1x_2^2-x_1^3  \\
&=2x_3p_1 -\frac{1}{2} (p_6+3p_4)=:p_9\\
\cL_{f'} p_9 & = 2p_1^2+2x_3p_2  - \frac{1}{2} (p_7+3p_5) =:p_{10}\\
\cL_{f'} p_{10} & = 6p_1p_2 + 2x_3 p_3 +\frac{1}{2}(9p_4 + 7p_6)=: p_{11}
\end{align*}
\begin{align*}
\cL_{f'} p_{11} & = 6p_2^2+8p_1p_3 - 8x_3 p_2 +\frac{1}{2}(9p_5 + 7p_7)=: p_{12} \\
\cL_{f'} p_{12} & = 20p_2p_3-40p_1p_2- 8x_3 p_3 -\frac{1}{2}(63p_4 + 61p_6)=: p_{13} \\
\cL_{f'} p_{13} & = 20p_3^2 - 120p_2^2 -48p_1p_3 + 32 x_3 p_2 -\frac{1}{2}(63p_5 + 61p_7)=: p_{14} \\
\cL_{f'} p_{14} & = -448p_2p_3 + 224 p_1p_2 + 32 x_3 p_3 +\frac{1}{2}(549p_4 + 547p_6)=:p_{15}\\
\cL_{f'} p_{15} & = 2016p_2^2 -448p_3^2  + 256 p_1p_3 - 128 x_3 p_2 +\frac{1}{2}(549p_5 + 547p_7) =:p_{16}\\
\cL_{f'} p_{16} & = 7872p_2p_3- 1152p_1p_2 - 128x_3p_3 - \frac{1}{2} (4923 p_4 + 4921 p_6) \\
& = \frac{1}{2}(1485p_4 + 1215p_6) - 256p_{11} - 144p_{13} - 24p_{15}.
\end{align*}

We thus obtain the following super-linearization of the original system~\eqref{eq:ex1}:
$$
\begin{cases}
\dot x_1 = x_2 \\
\dot x_2 = -x_1 \\
\dot x_3 = p_1 \\
\dot x_4 = x_3 + p_4 \\
\dot x_5 = -x_5 + p_7 \\
\dot p_i = p_{i+1}, \mbox{ for } i = 1,2,4,5,6,8,\cdots,15 \\
\dot p_3 = -4p_2 \\
\dot p_7 = -10p_6 - 9p_4 \\
\dot p_{16} = \frac{1485}{2}p_4 + \frac{1215}{2}p_6 - 256p_{11} - 144p_{13} - 24p_{15}.
\end{cases}
$$

\end{enumerate}

\section{Summary and Outlook}
We provided in this paper a sufficient condition for a system $\dot x(t) = f(x(t))$, with $f$ a polynomial vector field, to be super-linearizable. The condition is simply expressed in terms of cycles in what we called the weighted dependency graph of the system. The proof of the main result is constructive, and we have sketched an algorithm based on it that produces a super-linearization of  vector fields meeting the sufficient condition. The algorithm was also illustrated on an example.

The main result of this paper provides a generalized converse of the results in~\cite{belabbas2022canonical}. Indeed, while the canonical form exhibited there entails that in the original dynamics, the variables on which the nonlinear terms depend have to evolve linearly, it is easy to see that this fact does not hold for the system~\eqref{eq:ex1}. The gap of course lies in the fact that~\cite{belabbas2022canonical} restricts its scope to systems with only one visible observable, which precludes the nested super-linearizations that arise in the inductive step of the proof. In terms of the vocabulary introduced in this paper, the results of~\cite{belabbas2022canonical} only deal with skeleton graphs of {\em depth}~$1$. We will address the converse of the results presented in this paper, similarly generalize the results of~\cite{belabbas2022canonical}, in future work.

\bibliographystyle{IEEEtran}
\bibliography{carlerefs}

\end{document}